\DeclareMathOperator{\ot}{ot}
\DeclareMathOperator{\cf}{cf}
\DeclareMathOperator{\pcf}{pcf}
\DeclareMathOperator{\tcf}{tcf}
\DeclareMathOperator{\lh}{lh} 
\newcommand{\lex}{<_{\rm lex}}
\newtheorem{theorem}{Theorem}
\newtheorem{lemma}{Lemma}
\newtheorem{remark}{Remark}
\newtheorem{definition}{Definition}
\newtheorem*{fact}{Fact}
\title{Squares, scales and lines} 
\author{James Cummings} \thanks{The author was partially supported by NSF grant DMS–2054532.}
\begin{document}

\begin{abstract}
  We use hypotheses from PCF theory to construct a linear ordering
  which has cardinality the successor of a singular cardinal of countable cofinality, and is
  {\em incompact} in the following sense: the ordering is not sigma-scattered,
  but every smaller subordering is sigma-wellordered. Such orderings were
  first constructed by Todorcevic \cite[Section 7.6]{Stevobook} using Jensen's square principle. 
\end{abstract}

\maketitle

\section{Introduction}

Todorcevic \cite[Section 7.6]{Stevobook} proved that
if $\kappa$ is a singular
cardinal of cofinality $\omega$ and $\square_\kappa$ holds, then there is a
linear ordering of cardinality $\kappa^+$ which has density $\kappa$ (in particular
it is not $\sigma$-scattered) while every subordering of size at most $\kappa$
is $\sigma$-wellordered.\footnote{See Section \ref{LOs} for some background
about scattered and $\sigma$-scattered linear orderings.}
This is a rather typical application of $\square_\kappa$ to
construct a structure of size $\kappa^+$ which is {\em incompact}, in the sense 
that it is different in some way from all of its smaller substructures. 

The argument by Todorcevic goes through the theory of minimal walks and $\rho$-functions.
Starting with a suitable $\square_\kappa$-sequence, the minimal walk on that sequence is used to
construct a function $\rho:[\kappa^+]^2 \rightarrow \kappa$ with strong combinatorial properties:
roughly speaking the function $\rho$ ``remembers'' that the club sets in the $\square_\kappa$-sequence
are coherent and have small order type. The function $\rho$ is used to construct a sequence of functions which
is (at least morally speaking) a {\em scale} in the sense of PCF theory, and then the desired linear ordering
is read off from the scale. 

A central idea in the author's prior work with Foreman and Magidor on singular cardinal combinatorics
(see especially \cite{CFM}) is that principles from PCF theory can often do the work of
$\square_\kappa$ and its relatives. This turns out to be the case here: PCF-theoretic principles  
studied in \cite{CFM} can do the work of $\square_\kappa$ in constructing incompact linear orderings
of the type discussed above, and this permits us to put some other hypotheses in place of $\square_\kappa$. 

Our main results are that:
\begin{itemize}
\item If $\kappa$ is a singular
cardinal of cofinality $\omega$ and the weak square principle $\square^*_\kappa$ holds, then there is a
linear ordering of cardinality $\kappa^+$ which has density $\kappa$ while every subordering of size at most $\kappa$
is $\sigma$-wellordered.
\item If $\kappa$ is singular strong limit of cofinality $\omega$ and $2^\kappa > \kappa^+$, then
  the same conclusion holds.
\end{itemize}

In fact we will derive the conclusion from a PCF hypothesis (the existence of a {\em disjointing scale}) and
derive such a scale from both the given hypotheses. We note for the record that neither of our hypotheses
implies the other: if $V = L$ then $\square_\kappa$ holds and $2^\kappa = \kappa^+$ for all $\kappa$,
while work of Gitik and Sharon \cite{GitikSharon} gives models where $2^\kappa > \kappa^+$ and $\square^*_\kappa$ fails
for some singular strong limit $\kappa$ of cofinality $\omega$. 

In a short appendix we spell out for the interested reader the relation between 
the constructions from \cite[Section 7.6]{Stevobook} and PCF-theoretic scales.

\section{Preliminaries}

\subsection{Linear orderings} \label{LOs}

A linear ordering $L$ is {\em scattered} if and only if  $(\mathbb Q, <)$ does not embed into $L$.
Hausdorff \cite{Hausdorff} proved that the class of scattered orderings is the least class which contains
the one point ordering and is closed under wellordered and reverse wellordered lexicographic sums.
It follows that the property of being scattered is absolute, and each scattered ordering can
be ``certified'' by a description of how it may be built.

Using this description of scattered sets an easy induction shows:
\begin{fact}(Hausdorff) If $\lambda$ is an infinite regular cardinal and
  $L$ is a scattered set with $\vert L \vert \ge \lambda$, then
  at least one of $\lambda$ and $\lambda^*$ (the reverse of $\lambda$) embeds into $L$.
\end{fact}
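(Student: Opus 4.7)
The plan is to argue by induction on the Hausdorff rank of the scattered ordering $L$. Recall that by Hausdorff's structure theorem (quoted in the excerpt just before the Fact), every scattered ordering is either a single point, or a lexicographic sum $L = \sum_{i \in I} L_i$ in which the index set $I$ is either a wellordering or a reverse wellordering and each summand $L_i$ is itself scattered of strictly lower rank. The rank is well-founded, so induction on it is legitimate.

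For the base case, if $L$ is a single point then $\vert L \vert = 1 < \lambda$, so the hypothesis fails and there is nothing to prove. For the inductive step, fix $L = \sum_{i \in I} L_i$ with $\vert L \vert \geq \lambda$, and assume the result for each $L_i$. Without loss of generality assume $I$ is a wellordering; the other case is entirely symmetric and will produce a copy of $\lambda^*$ instead of $\lambda$.

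Now split into two cases. First, if some summand $L_i$ has cardinality at least $\lambda$, then by the inductive hypothesis applied to $L_i$, either $\lambda$ or $\lambda^*$ embeds into $L_i$, hence into $L$, and we are done. Second, suppose every summand satisfies $\vert L_i \vert < \lambda$. Then since $\lambda$ is regular and a union of fewer than $\lambda$ sets each of size less than $\lambda$ has cardinality less than $\lambda$, the bound $\vert L \vert \geq \lambda$ forces $\vert I \vert \geq \lambda$. As $I$ is wellordered and has size at least $\lambda$, its order type is an ordinal of cardinality $\geq \lambda$, hence at least $\lambda$ as an ordinal; in particular $\lambda$ is order-isomorphic to an initial segment of $I$. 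Choosing one element from each $L_i$ with $i$ in this initial segment (all $L_i$ being nonempty) gives an embedding of $\lambda$ into $L$, completing the induction.

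There is no real obstacle here; the only ingredient beyond the structure theorem is the standard fact that a regular cardinal is not the union of fewer than $\lambda$ sets of size $<\lambda$, which is exactly what handles the dichotomy in the inductive step.
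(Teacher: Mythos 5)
Your proof is correct and is exactly the "easy induction" the paper alludes to: induct on the Hausdorff rank coming from the structure theorem, split according to whether some summand already has size $\ge\lambda$, and otherwise use regularity of $\lambda$ to force the (reverse) wellordered index set to have order type at least $\lambda$. The only cosmetic point is that some summands could be empty, but these can be discarded without affecting the sum, so the selection of one point per index in the initial segment goes through as you describe.
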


A linear ordering $L$ is {\em $\sigma$-scattered} if and only if it is the union
of countably many scattered subsets, and {\em $\sigma$-wellordered} 
if and only if it is the union of countably many wellordered subsets.
It is easy to see that:
\begin{fact} If $\lambda$ is an  regular uncountable cardinal and
  $L$ is a $\sigma$-scattered set with $\vert L \vert \ge \lambda$, then
  at least one of $\lambda$ and $\lambda^*$ (the reverse of $\lambda$) embeds into $L$.
\end{fact}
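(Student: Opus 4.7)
The plan is to reduce the $\sigma$-scattered case directly to the scattered case stated in the preceding Fact, using regularity of $\lambda$ as a pigeonhole.

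First I would write $L = \bigcup_{n < \omega} L_n$ with each $L_n$ scattered, as given by the definition of $\sigma$-scattered. The next step is to argue that at least one $L_n$ has cardinality at least $\lambda$. Suppose for contradiction that $|L_n| < \lambda$ for every $n$. Because $\lambda$ is regular and uncountable, the supremum $\mu = \sup_n |L_n|$ is strictly less than $\lambda$ (the countable sequence of cardinals below $\lambda$ is bounded by regularity and $\omega < \lambda$). Then $|L| \le \aleph_0 \cdot \mu < \lambda$, contradicting $|L| \ge \lambda$. Fix such an $n$ with $|L_n| \ge \lambda$.

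Now apply Hausdorff's fact from the preceding paragraph to the scattered set $L_n$: since $\lambda$ is an infinite regular cardinal and $|L_n| \ge \lambda$, either $\lambda$ or $\lambda^*$ embeds into $L_n$. Composing with the inclusion $L_n \hookrightarrow L$ gives the desired embedding into $L$.

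There isn't really an obstacle here; the only subtle point is making sure the cardinal arithmetic genuinely needs $\lambda$ uncountable (which is why the prior fact, stated for all infinite regular $\lambda$, is upgraded to require uncountability in the $\sigma$-scattered version — otherwise $\mathbb{Q}$ itself, a countable union of singletons, would be a counterexample with $\lambda = \omega$). So I would emphasize in the write-up exactly where uncountability of $\lambda$ is used.
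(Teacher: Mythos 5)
Your proof is correct and is exactly the intended argument (the paper states this fact without proof, as ``easy to see''): decompose $L$ into countably many scattered pieces, use the regularity and uncountability of $\lambda$ to find a single piece of cardinality at least $\lambda$, and apply Hausdorff's fact to that piece. One correction to your closing aside: $\mathbb{Q}$ is \emph{not} a counterexample at $\lambda = \omega$, since $\omega$ embeds into $\mathbb{Q}$; indeed the statement remains true for $\lambda = \omega$ (every infinite linear order embeds $\omega$ or $\omega^*$). What fails at $\lambda = \omega$ is only your pigeonhole step of locating one scattered piece of size $\ge \lambda$, since the pieces may all be finite --- so uncountability is needed for this particular argument, not for the truth of the conclusion.
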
   

Given a linear ordering $L$ we will say that $D \subseteq L$ is {\em weakly dense}
if and only if for all $a, c \in L$ with $a < c$ there is $b \in D$ with $a \le b \le c$.
We will say that $L$ {\em has density $\kappa$} if it has a weakly dense subset of cardinality
$\kappa$. 
It is easy to see that if $\kappa$ is an infinite cardinal and $L$ has
density $\kappa$ then neither $\kappa^+$ nor its reverse can embed into $L$.
Combining this with the preceding fact:
\begin{fact} 
  If $\kappa$ is an infinite cardinal,
  $\vert L \vert \ge \kappa^+$ and $L$ has density $\kappa$, then $L$ is not $\sigma$-scattered.
\end{fact}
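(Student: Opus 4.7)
The plan is to package together the two preceding observations and derive a direct contradiction. Suppose for contradiction that $L$ is $\sigma$-scattered with $|L|\ge\kappa^+$ and density $\kappa$, witnessed by a weakly dense $D\subseteq L$ of size $\kappa$. The first step is to invoke the preceding fact with $\lambda=\kappa^+$, which is a regular uncountable cardinal since $\kappa$ is infinite; this yields an order embedding of $\kappa^+$ or of $(\kappa^+)^*$ into $L$. By replacing $L$ with its reverse if necessary (and noting that $D$ remains weakly dense in the reverse), I may assume that some $f\colon\kappa^+\to L$ is an order embedding.

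The second step is to contradict the existence of $f$ using the remark made immediately before the statement: weak density $\kappa$ forbids an embedded copy of $\kappa^+$ or its reverse. The one line of computation here runs as follows. For each $\alpha<\kappa^+$, weak density supplies $d_\alpha\in D$ with $f(\alpha)\le d_\alpha\le f(\alpha+1)$. If $\alpha<\alpha'$ and $\alpha+1<\alpha'$, then strictness of $f$ gives $d_\alpha\le f(\alpha+1)<f(\alpha')\le d_{\alpha'}$, so $d_\alpha\ne d_{\alpha'}$. Hence the fibers of $\alpha\mapsto d_\alpha$ have size at most two, so $\kappa^+\le 2|D|=\kappa$, a contradiction.

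I do not anticipate any genuine obstacle: the statement is essentially a corollary assembling the preceding fact (handling the $\sigma$-scatteredness side) and the remark just above it (handling the density side). The only subtlety worth spelling out is the ``at most $2$-to-$1$'' estimate above, which is how weak density — only $f(\alpha)\le d_\alpha\le f(\alpha+1)$ rather than strict betweenness — is converted into an essentially injective map from $\kappa^+$ into $D$.
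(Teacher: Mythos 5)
Your proposal is correct and follows the paper's intended route exactly: the paper obtains this fact by combining the preceding fact (applied with $\lambda=\kappa^+$) with the remark that density $\kappa$ excludes embeddings of $\kappa^+$ and its reverse. Your only addition is to spell out the ``easy to see'' remark via the at-most-two-to-one map $\alpha\mapsto d_\alpha$, which is a correct and standard way to do it.
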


\subsection{Scales}

We summarise some information about PCF theory, with a focus on singular cardinals
of cofinality $\omega$ and reduced products taken modulo the ideal of finite sets.

A classical result by Shelah \cite{CardArith} implies that if
$\kappa$ is a singular cardinal of cofinality $\omega$ then there exists
an increasing sequence of regular cardinals $(\kappa_n)_{n < \omega}$ cofinal in
$\kappa$, together with a sequence $(f_\alpha)_{\alpha < \kappa^+}$
which is increasing and cofinal in $\prod_n \kappa_n$ ordered by eventual domination. 
We say that $(f_\alpha)_{\alpha < \kappa^+}$ is a {\em scale of length $\kappa^+$ in $\prod_n \kappa_n$}:
 in the obvious way we also define scales of length $\mu$ for any regular $\mu > \kappa$.

For our purposes it will be convenient to have a slightly more generous notion.
Let $(\lambda_m)_{m < \omega}$ be a sequence of ordinals such that $\lambda_m < \kappa$
and $\cf(\lambda_m) \rightarrow \kappa$, that is to say that
$\{ m : \cf(\lambda_m) < \eta \}$ is finite for  all  $\eta < \kappa$.
Let $<^*$ be the relation of eventual domination. 
We will say that a {\em weak scale of length $\mu$ in $\prod_m \lambda_m$} is
a sequence of functions $(g_\alpha)_{\alpha < \mu}$
such that:
\begin{enumerate}
\item $g_\alpha: \omega \rightarrow \kappa$.
\item $(g_\alpha)$ is $<^*$-increasing.
\item $g_\alpha(m) < \lambda_m$ for all but finitely many $m$.
\item For all $h \in \prod_m \lambda_m$, there is
  $\alpha$ such that $h <^* g_\alpha$.
\end{enumerate}
It is a standard fact that existence of a weak scale of length $\mu$ implies the existence of a scale of length $\mu$,
but we will not need this. 

%
%

We need some definitions from \cite{CFM}. In that paper they were made in the setting of scales,
but they apply equally well to weak scales. 
\begin{definition}
  Let $(f_\alpha)_{\alpha < \mu}$ be a weak scale of length $\mu$ in $\prod_n \lambda_n$,
  and let $\alpha < \mu$ be an ordinal with $\omega < \cf(\alpha) < \kappa$. 
  \begin{itemize}
  \item
    $\alpha$ is {\em good} if and only if there exist  $A \subseteq \alpha$ unbounded 
    and $m < \omega$ such that $(f_\alpha(n))_{\alpha \in A}$ is strictly increasing for all $n \ge m$.
  \item $\alpha$ is {\em better} if and only if there exists  $C \subseteq \alpha$ closed unbounded
    such that for all $\beta \in \lim(C)$ there is $m$ such that $f_\alpha(n) < f_\beta(n)$ for all
    $\alpha \in C \cap \beta$ and all $n \ge m$.
  \item   
    $\alpha$ is {\em very good} if and only if there exist  $C \subseteq \alpha$ closed unbounded
    and $m < \omega$ such that $(f_\alpha(n))_{\alpha \in C}$ is strictly increasing for all $n \ge m$.
    \end{itemize}
\end{definition} 
  It is easy to see that very good points are better, and better points are good.

  It is shown in \cite{CFM} that if $\kappa$ is singular of cofinality $\omega$ and
  $\square^*_\kappa$ holds then there is a scale of length $\kappa^+$ in which all points of uncountable cofinality are better.
  Since the proof is quite short and we need the same ideas later, we sketch the proof here.
  Let $({\mathcal C}_\alpha)_{\alpha < \kappa^+}$ be a $\square^*_\kappa$-sequence, that is:
  \begin{itemize}
  \item ${\mathcal C}_\alpha$ is a family of closed unbounded subsets of $\alpha$, each of order type
    less than $\kappa$, with $1 \le \vert {\mathcal C}_\alpha \vert \le \kappa$.
  \item For all $C \in {\mathcal C}_\alpha$ and all $\beta \in \lim(C)$,
    $C \cap \beta \in {\mathcal C}_\beta$.
  \end{itemize}  
  Let 
  $(f_\alpha)_{\alpha < \kappa^+}$ be some scale of length $\kappa^+$ in some product $\prod_n \kappa_n$.
  We will constuct an increasing subsequence $(g_\beta)_{\beta < \kappa^+}$ of  
  the scale $(f_\alpha)_{\alpha < \kappa^+}$, such that all
  points of uncountable cofinality are better in $(g_\beta)_{\beta < \kappa^+}$.

  Let $g_0 = f_0$ and let $g_{\beta+1} = f_\alpha$ where $\alpha$ is chosen minimal such that 
  $g_\beta <^* f_\alpha$.
  Suppose that we defined $(g_\beta)_{\beta < \gamma}$ for some limit ordinal $\gamma$. For each $C \in {\mathcal C}_\gamma$
  let $m$ be such that $\ot(C) < \kappa_m$, and define $h^C_\gamma(n) = \sup (\{ g_\beta(n) : \beta \in C \} )$ for $n \ge m$.
  Now choose $\alpha$ least such that $h^C_\gamma <^* f_\alpha$ for all $C \in {\mathcal C}_\gamma$, which is
  possible since $\vert {\mathcal C}_\gamma \vert \le \kappa$,
  and then let $g_\gamma = f_\alpha$.

  We need to verify that points of uncountable cofinality are better, so let $\gamma < \kappa^+$ have uncountable cofinality
  and let $C \in {\mathcal C}_\gamma$. For every $\beta \in \lim(C)$ we have $C \cap \beta \in {\mathcal C}_\beta$,
  and there is $m$ so large that $\ot(C \cap \beta) < \kappa_m$ and
  $g_\beta(n) > h^{C\cap\beta}_\beta(n)$ for all $n \ge m$. 
  By definition $h^{C\cap\beta}_\beta(n) = \sup (\{ g_\alpha(n) : \alpha \in C \cap \beta \} )$ for $n \ge m$,
  so that $g_\alpha(n) > g_\beta(n)$ for all $\alpha \in C \cap \beta$ and all $n \ge m$.

\section{Disjointing scales and incompact lines}  

The following definition is implicit in \cite{CFM} but is worth making explicit here. Again
the definition was made for scales but works equally well for weak scales. 
\begin{definition}
  Let $(f_\alpha)_{\alpha < \kappa^+}$ be a weak scale of length $\kappa^+$ in $\prod_n \lambda_n$.
  Then $(f_\alpha)_{\alpha < \kappa^+}$ is {\em disjointing} if for every $\beta < \kappa^+$ there
  is a sequence of natural numbers $(n_\alpha: \alpha < \beta)$ such that
  for $f_\alpha(n) < f_{\alpha'}(n)$ for all 
  $\alpha < \alpha' < \beta$ and all $n \ge n_\alpha, n_{\alpha'}$.
\end{definition}

We note that a disjointing scale is another example of an incompact structure
of size $\kappa^+$. By the pigonhole principle it is never possible to choose
$(n_\alpha)_{\alpha < \kappa^+}$ with the property above.

It is proved in \cite{CFM} that a scale of length $\kappa^+$ in which every point of uncountable cofinality is
better is a disjointing scale, and the proof works equally well for weak scales.
The proof is quite short, so we sketch it here.

We proceed by induction. 
At points $\alpha$ of
countable cofinality we choose $(\alpha_n)_{n < \omega}$ increasing and cofinal in $\alpha$ with $\alpha_0 = 0$,
and choose values $(n_\beta)_{\alpha_n \le \beta < \alpha_{n+1}}$ which work in the interval $[\alpha_n, \alpha_{n+1})$,
  arranging in addition that $f_{\alpha_m}(t) < f_{\alpha_n}(t) \le f_\beta(t) < f_{\alpha_{n+1}}(t)$ for all
 $\beta \in [\alpha_n, \alpha_{n+1})$,  $m < n$ and $t \ge n_\beta$. 
    At points $\alpha$ of uncountable cofinality we fix a club subset $C$ of $\alpha$ with $\min(C) = 0$ witnessing that
    $\alpha$ is better, enumerate $\lim(C)$ in increasing order as $(\alpha_i)_{i < \cf(\alpha)}$, and  
   choose values $(n_\beta)_{\alpha_i \le \beta < \alpha_{i+1}}$ which work in the interval $[\alpha_i, \alpha_{i+1})$,
     arranging in addition that $f_{\alpha_j}(t) < f_{\alpha_i}(t) \le f_\beta(t) < f_{\alpha_{i+1}}(t)$ for all
     $\beta \in [\alpha_i, \alpha_{i+1})$,  $j < i$ and $t \ge n_\beta$.

For our purposes the main point of disjointing weak scales is that they give
incompact linear orderings of the type we discussed in the introduction.
The following Lemma distills the main point of Todorcevic's construction from
\cite[Section 7.6]{Stevobook}.
\begin{lemma} 
  Let $(f_\alpha)$ be a disjointing weak scale of length $\kappa^+$ in $\prod_n \lambda_n$,
  and let $L = \{ f_\alpha: \alpha < \kappa^+ \}$ ordered with the lexicographic ordering
  $\lex$. Then $L$ has a weakly dense subset of size $\kappa$, $L$ is
  not $\sigma$-scattered, and every small subordering of $L$ is $\sigma$-wellordered.
\end{lemma}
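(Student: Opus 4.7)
The plan is to establish the three assertions in the order (small suborderings are $\sigma$-wellordered), (existence of a weakly dense subset of size $\kappa$), and (failure of $\sigma$-scatteredness); the last will fall out of the second via the third fact of \S\ref{LOs}.

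For the subordering claim, I take $Y \subseteq L$ with $\vert Y \vert \le \kappa$, choose $\beta < \kappa^+$ with $X := \{\alpha : f_\alpha \in Y\} \subseteq \beta$, and apply the disjointing property at $\beta$ to obtain $(n_\alpha)_{\alpha < \beta}$. Partition $X = \bigcup_{m < \omega} X_m$ with $X_m := \{\alpha \in X : n_\alpha = m\}$. For distinct $\alpha, \alpha' \in X_m$ the disjointing inequality at coordinate $m$ forces $f_\alpha(m) \ne f_{\alpha'}(m)$, so the first coordinate of disagreement between $f_\alpha$ and $f_{\alpha'}$ is $\le m$. Consequently $\alpha \mapsto f_\alpha \restriction (m+1)$ is injective on $X_m$ and preserves the lex order, and its image lies inside the lex order on $\kappa^{m+1}$, which is a wellorder (being lex on a finite product of wellorders). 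Thus each $X_m$ is wellordered and $Y = \bigcup_m X_m$ is $\sigma$-wellordered.

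For weak density I take $D := D_0 \cup D_1$, where $D_0$ contains one representative $b(n,s) \in L$ extending each realized finite prefix $s : n \to \kappa$ and $D_1$ contains the lex-maximum of the cone $T(t) := \{f_\alpha : f_\alpha \restriction (n+1) = t\}$ whenever such a maximum exists. Counting prefixes and cones gives $\vert D \vert \le \kappa$. Given $a \lex c$ in $L$ with first disagreement at coordinate $k$, I distinguish two cases. If $(a,c)$ is a lex-gap in $L$, then any $f_\beta$ extending $a \restriction (k+1)$ and lex-above $a$ would also agree with $c$ on $[0, k)$ and satisfy $f_\beta(k) = a(k) < c(k)$, placing $f_\beta$ strictly in $(a,c)$ and contradicting the gap assumption; hence $a$ is the lex-max of $T(a \restriction (k+1))$ and $a \in D_1$. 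Otherwise pick any $f_\beta \in (a,c)$, let $j, j'$ be the first disagreements of $f_\beta$ with $a$ and with $c$ respectively, and set $n := \max(j, j') + 1$, $s := f_\beta \restriction n$: then $s$ is a realized prefix lex-strictly between $a \restriction n$ and $c \restriction n$, so any function extending $s$ (in particular $b(n,s) \in D_0$) lies in $(a, c)$.

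Non-$\sigma$-scatteredness is then immediate from $\vert L \vert = \kappa^+$, density $\kappa$, and the third fact of \S\ref{LOs}. The main conceptual point I expect to have to be careful about is the gap case in the density argument: recognising that lex-adjacencies in $L$ arise precisely as lex-maxima of the cones $T(t)$ is what lets the a priori $\kappa^+$-many potential adjacencies be absorbed into the size-$\kappa$ set $D_1$.
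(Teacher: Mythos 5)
Your proof is correct and follows essentially the same route as the paper: the $\sigma$-wellordering argument (partitioning by the disjointing values $n_\alpha$ and restricting to the first $m+1$ coordinates, which land in a lex-wellordered finite product) is identical, and the weakly dense set is likewise built from size-$\kappa$-many representatives of realized finite prefixes. The only difference is cosmetic, in how lex-adjacent pairs are handled: the paper absorbs them by choosing the prefix representative $\lex$-minimally when a minimal extension exists (so the upper endpoint of a gap is itself in $D$), while you add the lex-maxima of the cones $T(t)$ as a second size-$\kappa$ set so the lower endpoint is in $D$; both devices do the same job.
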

  
\begin{proof} The argument for the weakly dense subset is standard but we give it for the sake of completeness.
  For each $s \in \bigcup_{n < \omega} \prod_{i < n} \kappa_i$, let $g_s$ be some $f_\alpha$ with
  $f_\alpha \restriction \lh(s) = s$ if such a function exists,
  choosing $g_s$ as the $\lex$-minimal such $f_\alpha$ when there is a minimal one.

  Now let $f_\eta \lex f_\zeta$, let $t$ be the longest common initial segment
  of $f_\eta$ and $f_\zeta$, then let $n = \lh(t)$ and $u = f_\zeta \restriction n+1$.
  If $f_\zeta$ is $\lex$-minimal with initial segment $u$ then $f_\zeta = g_u$
  and we are done, otherwise there is $\mu$ such that $f_\mu \lex f_\zeta$
  and $f_\mu \restriction n + 1 = u$. Let $v$ be the longest common initial segment
  of $f_\mu$ and $f_\zeta$, then let $n' = \lh(v)$ and $w = f_\mu \restriction n'$.
  It is easy to see that $g_w$ exists, and $f_\eta \lex g_w \lex f_\zeta$.

  By facts from Section \ref{LOs}, $L$ does not embed $\kappa^+$ or its reverse and
  so $L$ is not $\sigma$-scattered. For the second claim, it will suffice to show
  that $\{ f_\alpha: \alpha < \beta \}$ is $\sigma$-wellordered by $\lex$
  for all $\beta < \kappa^+$. To see this let $(n_\alpha)_{\alpha < \beta}$ witness
  the disjointing property, and let $L_n = \{ f_\alpha: \mbox{$\alpha < \beta$ and $n_\alpha = n$} \}$.
  If $\alpha, \alpha' < \beta$ with $f_\alpha \lex f_{\alpha'}$ and $n_\alpha = n_\alpha'= n$, then 
  the first point of disagreement between $f_\alpha$ and $f_{\alpha'}$ is at most $n$,
  so that $f_\alpha \restriction n+1 \lex f_{\alpha'} \restriction n+1$: so the map
  $f \mapsto f \restriction n+1$ is a $\lex$-order preserving map from $L_n$
  to $\prod_{i \le n} \lambda_i$, which is a $\lex$-wellordered set.
\end{proof}

Combining the results so far, we have proved:
\begin{theorem} If $\kappa$ is a singular
cardinal of cofinality $\omega$ and the weak square principle $\square^*_\kappa$ holds, then there is a
linear ordering of cardinality $\kappa^+$ which is not $\sigma$-scattered, while every subordering of size at most $\kappa$
is $\sigma$-wellordered.
\end{theorem}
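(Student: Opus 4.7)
The plan is simply to chain together the three ingredients already assembled in the excerpt. First, invoke the sketch following the statement of the $\square^*_\kappa$-result: starting from an arbitrary scale $(f_\alpha)_{\alpha < \kappa^+}$ in some product $\prod_n \kappa_n$ and a $\square^*_\kappa$-sequence $(\mathcal{C}_\alpha)_{\alpha < \kappa^+}$, extract an increasing subsequence $(g_\beta)_{\beta < \kappa^+}$ in which every point of uncountable cofinality is better. An increasing subsequence of a scale is itself a scale of the same length (hence in particular a weak scale), so $(g_\beta)$ is a weak scale of length $\kappa^+$ in $\prod_n \kappa_n$ with every point of uncountable cofinality better.

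Second, apply the result from \cite{CFM} quoted just before the Definition of disjointing weak scales: a weak scale of length $\kappa^+$ in which every point of uncountable cofinality is better is disjointing. The short inductive argument immediately after the Definition shows exactly this, constructing the witnesses $(n_\alpha)_{\alpha < \beta}$ recursively on $\beta$, handling countable cofinality by taking an $\omega$-cofinal sequence and uncountable cofinality by using a club witnessing ``better.'' So $(g_\beta)_{\beta < \kappa^+}$ is a disjointing weak scale.

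Third, feed $(g_\beta)$ into the Lemma. The Lemma delivers a lexicographic ordering $L = \{g_\beta : \beta < \kappa^+\}$ on which the three required properties hold: $L$ has a weakly dense subset of size $\kappa$, $L$ is not $\sigma$-scattered, and every subordering of $L$ of size at most $\kappa$ is $\sigma$-wellordered (since every initial segment $\{g_\beta : \beta < \gamma\}$ with $\gamma < \kappa^+$ is $\sigma$-wellordered by the Lemma, and every subordering of size $\leq \kappa$ is contained in such an initial segment).

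Since the argument is a pure assembly of statements already established in the excerpt, there is no genuine obstacle; the only small bookkeeping point is to confirm that a subsequence of a scale is still a scale/weak scale, which is immediate from the fact that $(g_\beta)$ remains $<^*$-increasing and cofinal (cofinality is inherited because we always chose $g_\beta$ at each stage to dominate previously defined values, and in fact every $f_\alpha$ is dominated by some $g_\beta$). With that noted, the theorem follows at once.
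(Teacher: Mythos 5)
Your proof is correct and is exactly the paper's argument: the theorem is obtained by chaining the $\square^*_\kappa$ construction of a scale with all points of uncountable cofinality better, the fact that such a (weak) scale is disjointing, and the Lemma producing the incompact lexicographic ordering. The small bookkeeping points you flag (a $<^*$-increasing cofinal subsequence is again a scale, and any subordering of size $\le\kappa$ sits inside a proper initial segment since $\kappa^+$ is regular) are handled implicitly in the paper in the same way.
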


\section{Weak scales from failures of SCH}

In this section we show how to get  weak scales with many better points from certain failures of the
Singular Cardinals Hypothesis (SCH). We note that a failure of SCH can be viewed
as an instance of incompactness, which fits well into the general theme of this paper.
The arguments require a slightly heavier dose of PCF theory than in previous sections.
As before we will focus on parts of the theory which are most relevant
to our main goal. 
All the needed background on PCF theory
can be found in the excellent account by Abraham and Magidor \cite{AbMg}.

We start with the notion of an {\em exact upper bound (eub)}.
Let $\mu$ be a regular cardinal with $\mu > \kappa$ and let
$(f_\alpha)_{\alpha < \mu}$ be a scale in $\prod_n \kappa_n$,
where  $(\kappa_n)_{n < \omega}$ is an increasing sequence of regular
cardinals cofinal in $\kappa$. 
If $\beta$ is a limit ordinal with $\beta \le \mu$  then a function $h: \omega \rightarrow On$ is
an eub for $(f_\alpha)_{\alpha < \beta}$ if:
\begin{itemize}
\item $f_\alpha <^* h$ for all $\alpha < \beta$.
\item For all $g <^* h$ there is $\alpha < \beta$ such that  $g <^* f_\alpha$.
\end{itemize}

\begin{remark}
  When an eub exists for $(f_\alpha)_{\alpha < \beta}$ it is unique modulo finite.
  An eub for $(f_\alpha)_{\alpha < \beta}$ is a least upper bound (lub)
  for $(f_\alpha)_{\alpha < \beta}$
  in the natural sense, but in general being an eub is stronger than being an lub. 
  The function $n \mapsto \kappa_n$ is an eub for $(f_\alpha)_{\alpha < \mu}$.
\end{remark}

There is a close connection between the existence of eub's and the
concept of a good point:
\begin{enumerate}
\item The following are equivalent for a limit ordinal $\beta < \mu$ with
  $\omega < \beta < \kappa$:
  \begin{enumerate}
  \item $\beta$ is a good point.
  \item There exists an eub $h$ for $(f_\alpha)_{\alpha < \beta}$ such that
    $\cf(h(n) = \cf(\beta)$ for all $n$.
  \end{enumerate}
\item If $\beta < \mu$ with $\cf(\beta) = \kappa^+$, and
  the set of good points below $\beta$ of cofinallty $\eta$ is stationary 
  for unboundedly many regular cardinals $\eta < \kappa$, then
  there is an eub $h$ for $(f_\alpha)_{\alpha < \beta}$ such that
  $\cf(h(n)) \rightarrow \kappa$.
\end{enumerate}
The first of these facts is straightforward. The existence of a suitable eub
follows from Shelah's Trichotomy Theorem \cite[Exercise 2.27]{AbMg}.

%


Now we sketch a proof that if $\kappa$ is a singular strong limit cardinal
of cofinality $\omega$ and $2^\kappa > \kappa^+$, then there is a scale 
$(g_\alpha)_{\alpha < \kappa^{++}}$ in $\prod_n \kappa_n$ 
for some sequence of regular cardinals $(\kappa_n)_{n < \omega}$ which is
increasing and cofinal in $\kappa$. This fact, which was communicated to us by
Todd Eisworth, follows by combining various results by Shelah.

Combining \cite[Conclusion 5.10 (2)]{CardArith} and \cite[$\otimes_1$ page 40]{CardArith},
there are a countable cofinal set of regular cardinals $A \subseteq \kappa$ and a uniform ultrafilter $U$
on $A$ such that $\kappa^{++} = \cf(\prod A/U)$.  Now we proceed as follows, referrring the reader
to \cite{AbMg} for the necessary facts about true cofinalities, the ideals $J_{<\lambda}[A]$, and PCF generators: 
\begin{itemize}
\item Since $A$ is progressive and $\kappa^{++} \in \pcf(A)$, 
 there is a set $B \subseteq A$ which generates $J_{<\kappa^{+++}}[A]$ over $J_{<\kappa^{++}}[A]$,
in particular $\kappa^{++} = \max \pcf(B)$ and 
 $\tcf(\prod B/J_{<\kappa^{++}}[B]) = \kappa^{++}$.   
\item If $\kappa^+ \in \pcf(B)$ then there is a set $C \subseteq B$ which generates 
$J_{<\kappa^{++}}[B]$ over $J_{<\kappa^+}[B]$, and replacing $B$ by $B \setminus C$ we may
assume in addition that $\kappa^+ \notin \pcf(B)$ and hence $J_{<\kappa^{++}}[B] = J_{<\kappa^+}[B]$. 
\item Since $\kappa$ is singular $J_{<\kappa^+}[B] = J_{<\kappa}[B]$, and is easily seen to be contained
in the ideal $J_{\rm bd}[B]$ of bounded subsets of $B$. 
\item It follows that $\tcf(\prod B/J_{\rm bd}[B]) = \kappa^{++}$. Choosing a cofinal
  subset  of order type $\omega$ in $B$, we obtain $(\kappa_n)_{n < \omega}$ and $(g_\alpha)_{\alpha < \kappa^{++}}$ as required.
\end{itemize}

Given a scale $(g_\alpha)_{\alpha < \kappa^{++}}$ in $\prod_n \kappa_n$,
we will construct a weak scale of length $\kappa^+$ in $\prod_n \mu_n$ for
some  sequence $(\mu_n)_{n < \omega}$ of ordinals less than $\kappa$,
so that $\cf(\mu_n) \rightarrow \kappa$ and every point of uncountable cofinality
is better.

We start by fixing a sequence $({\mathcal C}_\alpha)_{\alpha < \kappa^+}$
such that:
  \begin{itemize}
  \item ${\mathcal C}_\alpha$ is a family of closed unbounded subsets of $\alpha$, each of order type
    less than $\kappa$, with $1 \le \vert {\mathcal C}_\alpha \vert \le \kappa^+$.
  \item For all $C \in {\mathcal C}_\alpha$ and all $\beta \in \lim(C)$,
    $C \cap \beta \in {\mathcal C}_\beta$.
  \end{itemize}  
  Such sequences, sometimes known as {\em silly squares}, are easy to construct: fix for every
  limit ordinal $\alpha$ less than $\kappa^+$ a club set $C_\alpha$ with $\ot(C_\alpha) < \kappa$,
  and let ${\mathcal C}_\beta = \{ C_\alpha \cap \beta: \beta \in \lim(C_\alpha) \cup \{ \alpha \} \}$.

  We will define a subsequence $(h_\beta)_{\beta < \kappa^+}$ of $(g_\alpha)_{\alpha < \kappa^{++}}$,
  such that all points of uncountable cofinality are better in $(h_\beta)_{\beta < \kappa^+}$.
  The construction is quite similar to
  the one we gave earlier building a better scale from $\square^*_\kappa$, but this
  time we bound a set of size at most $\kappa^+$ pointwise suprema, using the fact
  that we are starting with a scale of length $\kappa^{++}$. 
  
  Let $h_0 = g_0$, and let $h_{\beta + 1} = g_\alpha$ for $\alpha$ least such that $h_\beta <^* g_\alpha$.  
    Suppose that we defined $(h_\beta)_{\beta < \gamma}$ for some limit ordinal $\gamma$ less than $\kappa^+$. 
  For each $C \in {\mathcal C}_\gamma$
  let $m$ be such that $\ot(C) < \kappa_m$, and define
  $k^C_\gamma(n) = \sup (\{ g_\beta(n) : \beta \in C \} )$ for $n \ge m$.
  Now choose $h_\gamma = g_\alpha$ where $\alpha$ is least such that $k^C_\gamma <^* g_\alpha$
  for all $C \in {\mathcal C}_\gamma$. The argument that all points of uncountable cofinality
  are better is exactly as in the construction of a better scale from $\square^*_\kappa$.

 The sequence $(h_\beta)_{\beta < \kappa^+}$ is of course bounded modulo finite in $\prod_n \kappa_n$. 
 By the preceding discussion of exact upper bounds, since every point of uncountable cofinality
 is good we may choose an exact upper bound $n \mapsto \mu_n$ where $(\mu_n)_{n < \omega}$
 is a sequence of ordinals with $\mu_n < \kappa_n$ and $\cf(\mu_n) \rightarrow \kappa$.
 Then  $(h_\beta)_{\beta < \kappa^+}$ is a weak scale in $\prod_n \mu_n$, and every point of uncountable
 cofinality is better.

  We have proved: 
\begin{theorem}
 If $\kappa$ is singular strong limit of cofinality $\omega$ and $2^\kappa > \kappa^+$, then
    then there is a
linear ordering of cardinality $\kappa^+$ which has density $\kappa$ while every subordering of size at most $\kappa$
is $\sigma$-wellordered.
\end{theorem}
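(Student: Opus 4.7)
The plan is to chain together the ingredients already assembled in the section. Under the hypotheses $\kappa$ singular strong limit of cofinality $\omega$ and $2^\kappa > \kappa^+$, I first produce a scale $(g_\alpha)_{\alpha < \kappa^{++}}$ in some $\prod_n \kappa_n$ using the Shelah/Eisworth argument sketched above (this is the $\pcf$-heavy step). Next I use the silly square construction to thin this out to a weak scale $(h_\beta)_{\beta < \kappa^+}$ in $\prod_n \mu_n$ for suitable $\mu_n < \kappa$ with $\cf(\mu_n)\to\kappa$, in such a way that every point of uncountable cofinality is better. Since every better point is good, the weak scale admits an eub of the required form, justifying the choice of the $\mu_n$. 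Finally I invoke the fact (proved in this section after the definition of disjointing, via the standard induction using better witnesses at uncountable-cofinality stages and cofinal $\omega$-sequences at countable-cofinality stages) that a weak scale in which every uncountable-cofinality point is better is disjointing, and then feed the disjointing weak scale into the Lemma to read off the desired linear ordering.

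Concretely, I would begin by applying the displayed $\pcf$ argument: extract a countable cofinal $A \subseteq \kappa$ of regular cardinals and a uniform ultrafilter $U$ on $A$ with $\cf(\prod A/U) = \kappa^{++}$; pass to a generator $B$ for $J_{<\kappa^{+++}}[A]$ over $J_{<\kappa^{++}}[A]$; remove from $B$ a generator of $J_{<\kappa^{++}}[B]$ over $J_{<\kappa^+}[B]$ if needed to ensure $\kappa^+ \notin \pcf(B)$; collapse $J_{<\kappa^{++}}[B] = J_{<\kappa^+}[B] = J_{<\kappa}[B] \subseteq J_{\rm bd}[B]$ using singularity of $\kappa$; and take an $\omega$-cofinal subsequence of $B$ to obtain the ambient $(\kappa_n)_n$ and the scale $(g_\alpha)_{\alpha < \kappa^{++}}$.

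Then I run the silly-square construction exactly as sketched, fixing $({\mathcal C}_\alpha)_{\alpha < \kappa^+}$ with $|{\mathcal C}_\alpha| \le \kappa^+$ and clubs of order type less than $\kappa$ that cohere under initial segments, and defining $(h_\beta)_{\beta < \kappa^+}$ inductively. At limit stages $\gamma < \kappa^+$ of uncountable cofinality, the coherence of $({\mathcal C}_\alpha)$ forces each such $\gamma$ to be better in $(h_\beta)$, by the same calculation that appeared in the $\square^*_\kappa$ argument. Because the scale from which we are thinning has length $\kappa^{++}$, at each limit stage of cofinality at most $\kappa^+$ we can bound the at most $\kappa^+$ pointwise suprema $k^C_\gamma$ by a single $g_\alpha$, which is the only reason the argument needs length $\kappa^{++}$ rather than $\kappa^+$. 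Since better implies good and cofinally many uncountable-cofinality better points appear below $\kappa^+$, Shelah's Trichotomy furnishes an eub $n\mapsto \mu_n$ with $\mu_n < \kappa_n$ and $\cf(\mu_n)\to\kappa$, so $(h_\beta)_{\beta < \kappa^+}$ is a weak scale in $\prod_n \mu_n$ with the required better points.

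The final step is routine: by the disjointing fact recalled above, $(h_\beta)_{\beta < \kappa^+}$ is a disjointing weak scale, and the Lemma then produces a linear ordering $L$ of size $\kappa^+$ with a weakly dense subset of size $\kappa$ (hence density $\kappa$) and with every subordering of size at most $\kappa$ $\sigma$-wellordered. The one genuine obstacle in the whole plan is the first bullet, the $\pcf$-theoretic construction of a scale of length $\kappa^{++}$ from $2^\kappa > \kappa^+$; every other step is a straightforward adaptation of material already developed in the paper, with ``silly square'' playing the role $\square^*_\kappa$ played earlier and the extra length $\kappa^{++}$ absorbing the increase in $|{\mathcal C}_\gamma|$.
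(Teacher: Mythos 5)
Your proposal is correct and follows essentially the same route as the paper: the Eisworth/Shelah PCF argument gives a scale of length $\kappa^{++}$, the silly square thins it to a weak scale of length $\kappa^+$ with all uncountable-cofinality points better, Trichotomy supplies the eub $(\mu_n)$ with $\cf(\mu_n)\to\kappa$, and then better implies disjointing and the Lemma yields the ordering. No substantive differences from the paper's proof.
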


 \section*{Acknowledgements}  Many thanks to Todd Eisworth for
 explaining how to get a scale of length $\kappa^+$ from a failure of SCH at $\kappa$. Thanks also to Justin Moore for alerting me to
 Todorcevic's results in \cite{Stevobook}, and their connection with PCF, during a very enjoyable visit to Cornell.

\section*{Appendix: Minimal walks, $\rho$ functions and scales}

  In this section we sketch some constructions by Todorcevic \cite[Chapter 7]{Stevobook}
  and their connections with PCF-theoretic scales

  Let $\kappa$ be any singular cardinal,  
  and let $(C_\alpha)_{\alpha < \kappa^+}$ is a $\square_\kappa$ sequence
  where $\ot(C_\alpha) < \kappa$ for all $\alpha$. 
 $\Lambda(\alpha, \beta)$ is the largest limit point of $C_\beta \cap (\alpha+1)$,
    or zero if no such limit point exists.

  Define $\rho:[\kappa^+]^2 \rightarrow \kappa$ as follows: 
  By convention $\rho(\alpha, \beta)$ is zero for $\alpha = \beta$.
  For $\alpha < \beta$ it is the maximum of the ordinals:
\begin{itemize}
\item  $\ot(C_\beta \cap \alpha)$
\item  $\rho(\alpha, \min(C_\beta \setminus \alpha))$
\item The ordinals $\rho(\xi, \alpha)$ for
   $\xi \in C_\beta \cap [\Lambda(\alpha, \beta), \alpha)$
\end{itemize}

\begin{remark} The set of $\xi$ in the third clause is finite.
   $\alpha$ itself can be a limit point of $C_\beta$, but then
    $\Lambda(\alpha, \beta) = \alpha$ and the set of $\xi$ is empty.
In this case $\min(C_\beta \setminus \alpha) = \alpha$,
so that $\rho(\alpha, \beta) = \ot(C_\beta \cap \alpha)$.
It is also easy to see that if $\alpha \in \lim(C_\beta)$ then
$\rho(\zeta, \alpha) = \rho(\zeta, \beta)$ for all $\zeta < \alpha$. 
\end{remark}

We list the key properties of $\rho$, labelled so that we can
 trace which ones are used when we build scales:
\begin{enumerate}
\item \label{rho1}
  For all ordinals $\nu < \kappa$ and $\beta < \kappa^+$,
  $\vert \{ \alpha < \beta: \rho(\alpha, \beta) \le \nu \} \vert < \nu^+$
\item \label{rho2} For $\alpha < \beta < \gamma < \kappa^+$:   
\begin{enumerate}
\item \label{rho2a} $\rho(\alpha, \beta) \le \max(\rho(\alpha, \gamma), \rho(\beta, \gamma))$.
\item \label{rho2b} $\rho(\alpha, \gamma) \le \max(\rho(\alpha, \beta), \rho(\beta, \gamma))$.
\end{enumerate}
\item \label{rho3} 
  For all ordinals $\nu < \kappa$ and $\beta < \kappa^+$,
 the set $\{ \alpha < \beta: \rho(\alpha, \beta) \le \nu \}$ is closed in $\beta$. 
\end{enumerate}

\begin{remark} 
  Property \ref{rho1} is typical for $\rho$ functions defined from minimal walks,
  the proof requires only that $\rho(\alpha, \beta) \ge \max(\ot(C_\beta \cap \alpha), \rho(\alpha, \min(C_\beta \setminus \alpha)))$
and  $C_\alpha$ is club in $\alpha$. 
\end{remark}

\begin{remark} Readers of \cite[Chapter 7]{Stevobook} will note that the $\rho$ functions defined there
  are more general, and we are just defining $\rho$ in a special case.   
  It is rather simple to
  prove the properties of the $\rho$ function we conside here directly, by imitating
  the proofs for the analogous function $\rho: [\omega_1]^2 \rightarrow \omega$ from \cite[Chapter 3]{Stevobook} and using
  the coherence property inherited from the square. 
\end{remark}

 Assume now that $\cf(\kappa) = \omega$ and that $(\kappa_n)_{n < \omega}$ is an increasing
 sequence cofinal in $\kappa$. Note that the sets  $P_{\kappa_n}(\beta)$
 increase with $n$ and their union is $\beta$. 
 Todorcevic defined a sequence of functions
 $(f_\beta)_{\beta < \kappa^+}$ in $\prod_{n < \omega} \kappa_n^+$ by
 $f_\beta(n) = \ot(P_{\kappa_n}(\beta))$.

 The properties of $\rho$ imply certain properties for the functions $f_\beta$.
 We will keep track of exactly how the properties of $\rho$ are used.  
 Let $R_1$ abbreviate ``$\rho$ has property \ref{rho1}''
 $R_{12}$ abbreviate ``$\rho$ has properties \ref{rho1} and \ref{rho2}''
 and $R_{123}$ abbreviate ``$\rho$ has properties \ref{rho1}, \ref{rho2} and \ref{rho3}''

\begin{enumerate} 

\item \label{prop1}  $(R_1)$ For all $\beta < \kappa$, $f_\beta(n) < \kappa_n^+$. 

  This follows immediately from property $\ref{rho1}$ of $\rho$.

\item \label{prop2}  ($R_{12}$)  $f_\beta <^* f_\gamma$ for $\beta < \gamma < \kappa^+$.

  Let $\beta \in P_{\kappa_m}(\gamma)$, then for $n \ge m$ we have
  $P_{\kappa_n}(\beta) \cup \{ \beta \} \subseteq P_{\kappa_n}(\gamma)$
  by property \ref{rho2b} of $\rho$, 
 so that $f_\beta(n)  < f_\gamma(n)$.

\item  \label{prop3} ($R_{12}$) Let $\alpha < \beta < \gamma$ with $\alpha, \beta \in P_{\kappa_m}(\gamma)$.
 then  $f_\alpha(n) < f_\beta(n)$ for $n \ge m$.

 Observe that $\alpha \in P_{\kappa_m}(\beta)$  by property \ref{rho2a} of $\rho$,
 and argue as in the preceding item.

\item \label{prop4} ($R_{12}$) Let $\gamma < \kappa^+$ with $\cf(\gamma) > \omega$. Then
   $\gamma$ is a good point. 

   Let $m$ be such that $P_{\kappa_m}(\gamma)$ is unbounded in $\gamma$.
and use the preceding item. 
   
\item \label{prop5}
   ($R_{123}$) Let $\gamma < \kappa^+$ with $\cf(\gamma) > \omega$. Then
   $\gamma$ is a very good point. 

   The argument is the  same as in the preceding item, but now
   property \ref{rho3} gives that  $P_{\kappa_m}(\gamma)$ is club in $\gamma$.

 \end{enumerate}

\bibliographystyle{amsplain}
\bibliography{ijm_for_mm}

\end{document}